 \nonstopmode \numberwithin{equation}{section}
\theoremstyle{plain}
\newtheorem{Thm}{Theorem}[section]
\newtheorem{lemma}[Thm]{Lemma}
\newtheorem{prop}[Thm]{Proposition}
\newtheorem{rem}[Thm]{Remark}
\theoremstyle{definition}
\newtheorem{defn}[Thm]{Definition}
\newtheorem{matrixcondition}[Thm]{Condition}
\newcommand{\Tr}{\operatorname{Tr}}
\newcommand{\Var}{\operatorname{Var}}
\newcommand{\Cov}{\operatorname{Cov}}
\title{CLT for LES of real valued random centrosymmetric matrices}
\author{Indrajit Jana}
\address{Indian Institute of Technology, Bhubaneswar}
\email{ijana@iitbbs.ac.in}
\thanks{Indrajit Jana - \textit{Email: ijana@iitbbs.ac.in;}\\ 
}
\author{Sunita Rani}
\address{Indian Institute of Technology, Bhubaneswar}
\email{s21ma09007@iitbbs.ac.in}
\thanks{Sunita Rani - \textit{Email: s21ma09007@iitbbs.ac.in;} (Corresponding author).\\
}
\begin{document}
\date{}

\begin{abstract}We study the fluctuations of the eigenvalues of real valued large centrosymmetric random matrices via its linear eigenvalue statistic. This is essentially a central limit theorem (CLT) for sums of dependent random variables. The dependence among them leads to behavior that differs from the classical CLT. The main contribution of this article is finding the expression of the variance of the limiting Gaussian distribution. The crux of the proof lies in combinatorial arguments that involve counting overlapping loops in complete undirected weighted graphs with growing degrees.

keywords: Centrosymmetric matrix, Linear eigenvalue statistics, Central limit theorem.

MSC2020 Subject Classification: 15B52; 60BXX; 60FXX
\end{abstract}

\maketitle


\section{Introduction} 

Random Matrix Theory (RMT) has been emerged as an important mathematical tool in various fields of science such as Mathematical Physics, Electrical Engineering, Statistics etc. \cite{BardenetHardy2020,BourgadeErdosYau2014,4493413,serfaty2024lecturescoulombrieszgases}.
We consider a random matrix $M$ of order $n \times n$. 
Let $\lambda_1, \lambda_2, \dots, \lambda_n\in \mathbb{C}$ denote its eigenvalues. 
The empirical spectral distribution is defined as  
$$
    \mu_{n}(\cdot) = \frac{1}{n}\sum_{i=1}^{n}\delta_{\lambda_{i}}(\cdot),
$$  
where $\delta_{\lambda_i}$ is the Dirac measure at $\lambda_i$. Note that $\mu_{n}$ is a random probability measure on $\mathbb{C}$, which is equipped with the Borel sigma algebra. The measure $\mu_{n}$ is a probability measure because $\mu_{n}(\mathbb{C})=1$, and $\mu_{n}$ is a random measure because it is dependent on the eigenvalues $\lambda_{1},\ldots,\lambda_{n},$ which are random variables.

Furthermore, we define the linear eigenvalue statistics (LES) as  
$$
    L_n(f) = \sum_{i=1}^n f(\lambda_i),
$$  
for a suitable test function $f:\mathbb{C}\to\mathbb{C}$. 
We are interested in the limiting behavior of the centered and rescaled 
LES. More precisely, we would like to investigate whether there exist deterministic sequences $\{c_{n}\}$ and $\{d_{n}\}$ such that $c_{n}\big(L_{n}(f) - d_{n}\big)$ converges to a non-trivial random variable. This study may be regarded as the analogue of proving a Central Limit Theorem (CLT) in classical probability.

The CLT for LES is an important topic in RMT, which is applicable statistics. In the classical CLT we study sums of independent random variables. In contrast, here we have a sum of dependent random variable; infact highly correlated random variables. This makes the problem both more challenging and more interesting. CLTs for LES of different types of matrices have been studied previously with significant contributions including \cite{Jonsson1982SomeLT,johansson1998fluctuations,sinai1998central,bai2008clt,lytova2009central}. In particular, such results are available for matrices including band and sparse matrices \cite{anderson2006clt,li2013central,shcherbina2015fluctuations,jana2016fluctuations}, Toeplitz and band Toeplitz matrices \cite{chatterjee2009fluctuations,liu2012fluctuations}, circulant matrices \cite{adhikari2017fluctuations,adhikari2018universality,maurya2021fluctuations}, and non-Hermitian matrices \cite{rider2006gaussian, o2016central, jana2022clt,kopel2015linear,cipolloni2021fluctuation, cipolloni2022fluctuations, cipolloni2023central}. In this article, we focus on the case of real valued centrosymmetric matrices as defined in Definition \ref{centrodefn}

The proof of such results requires tools from both probability theory and combinatorics. Unlike the classical CLT, the proof of convergence and the calculation of limiting variance was done using two completely different techniques. The convergence is established using probabilistic methods, while the limiting variance is computed using a different, combinatorial approach. The main contribution of this work lies in the combinatorial ideas used for the variance calculation, which is illustrated in Section \ref{sec:variance calculation}.



\section{Main result}\label{sec:main result}
We first define the centrosymmetric matrix. 
\begin{defn}[Centrosymmetric  Matrix]\label{centrodefn} A matrix is called centrosymmetric if its entries are  symmetric with respect to the center of the matrix. More precisely, let  $M = [m_{i,j}]_{n \times n}$ be a random matrix. The entries $m_{ij}$ are i.i.d. random variables, but they must satisfy the condition  
$$
m_{i, j}=m_{n-i+1, n-j+1} \text { for } i, j \in\{1, \ldots, n\} .
$$
Then $M$ is called a random centrosymmetric matrix.
    \end{defn}
\begin{figure}[h]
\centering
\begin{tikzpicture}[scale=0.8]
    \draw (0,0) rectangle (5,5);
    \foreach \y in {1,2,3,4}{
        \draw (0,\y) -- (5,\y);
    }
    \foreach \x in {1,2,3,4}{
        \draw (\x,0) -- (\x,5);
    }
    \fill[cyan!30] (5,5) rectangle (4,4);
    \fill[pink!30] (4,5) rectangle (3,4);
    \fill[teal!30] (3,5) rectangle (2,4);
    \fill[violet!30] (2,5) rectangle (1,4);
    \fill[orange!30] (1,5) rectangle (0,4);
    
    \fill[cyan!30] (0,0) rectangle (1,1);
    \fill[pink!30] (1,0) rectangle (2,1);
    \fill[teal!30] (2,0) rectangle (3,1);
    \fill[violet!30] (3,0) rectangle (4,1);
    \fill[orange!30] (4,0) rectangle (5,1);
     \fill[olive!30] (5,4) rectangle (4,3);
    \fill[magenta!30] (4,4) rectangle (3,3);
    \fill[green!30] (3,4) rectangle (2,3);
    \fill[yellow!30] (2,4) rectangle (1,3);
    \fill[lime!30] (1,4) rectangle (0,3);
     \fill[olive!30] (0,1) rectangle (1,2);
    \fill[magenta!30] (1,1) rectangle (2,2);
    \fill[green!30] (2,1) rectangle (3,2);
    \fill[yellow!30] (3,1) rectangle (4,2);
    \fill[lime!30] (4,1) rectangle (5,2);
    \fill[red!30] (0,2) rectangle (1,3);
    \fill[blue!30] (1,2) rectangle (2,3);
    \fill[gray!30] (2,2) rectangle (3,3);
    \fill[blue!30] (3,2) rectangle (4,3);
    \fill[red!30] (4,2) rectangle (5,3);
    
    \draw (0,0) grid (5,5);
    
    \draw[<->] (1.5,1.5) -- (3.5,3.5);
    \draw[dotted, <->] (0.5,0.5) -- (4.5,4.5);
    \draw[<->] (3.5,1.5) -- (1.5,3.5);
    \draw[dotted, <->] (4.5,0.5) -- (0.5,4.5);
    \draw[<->] (1.5,2.5) -- (3.5,2.5);
    \draw[dotted, <->] (0.5,2.5) -- (4.5,2.5);
    \draw[<->] (2.5,1.5) -- (2.5,3.5);
    \draw[dotted, <->] (2.5,0.5) -- (2.5,4.5);
    \draw[dotted, <->] (1.5,0.5) -- (3.5,4.5);
    \draw[dotted, <->] (3.5,0.5) -- (1.5,4.5);
    \draw[dotted, <->] (0.5,1.5) -- (4.5,3.5);
    \draw[dotted, <->] (4.5,1.5) -- (0.5,3.5);
\end{tikzpicture}
\caption{Symmetry pattern of a centrosymmetric 5$\times$5 matrix.}
\label{fig:cento_fig}
\end{figure}
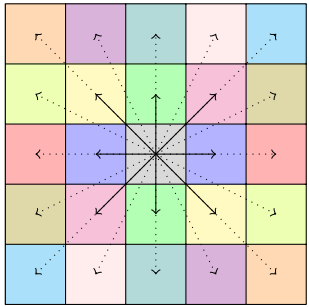

\begin{matrixcondition}\label{cond:matrixcond_real}
Let $M=[m_{ij}]_{n\times n}$ be a centrosymmetric matrix with entries 
$$
    m_{ij} = \frac{1}{\sqrt{n}}x_{ij},
$$ 
where $\{x_{ij}: 1\leq i,j\leq n\}$ are i.i.d. real valued random variables with constraint $x_{ij} = x_{n+1-i,\, n+1-j}$. We assume that $\mathbb{E}[x_{ij}] = 0$ and $\mathbb{E}[x_{ij}^2] = 1$ for all $1\leq i,j\leq n$. Moreover, the $x_{ij}$ are continuous random variables with bounded density. In addition, for every $p \in \mathbb{N}$, there exists a constant $C_p > 0$ such that  
$\mathbb{E}[x_{ij}^p] \leq C_p.$
\end{matrixcondition}

We now define the centered LES as follows.
\begin{align*}
    \operatorname{L}_{n}^{\circ}(f)=\sum_{i=1}^{n}f(\lambda_{i})-\sum_{i=1}^{n}\mathbb{E}[f(\lambda_{i})].
\end{align*}
From \cite[Theorem 5.2]{jana2024spectrum}, centrosymmetric matrix $M$ follows circular law.
Our main theorem, which is stated below, gives limiting distribution of $\operatorname{L}_{n}^{\circ}(f)$ as the matrix size increases. In what follows, the notation $\mathbb{D}=\{z\in \mathbb{C}:|z|\leq 1\}$ denotes the unit circle centered at origin, and $\partial \mathbb{D}=\{z\in \mathbb{C}:|z|=1\}$ is the boundary of $\mathbb{D}$.

\begin{Thm}\label{thm: main theorem}
Let $M$ be a random matrix satisfying Condition \ref{cond:matrixcond_real}. Let $f$ be a complex analytic function. Then $\operatorname{L}_{n}^{\circ}(f)\stackrel{d}{\to}N(0, V_{f})$, where $\stackrel{d}{\to}$ stands for convergence in distribution, and
\begin{align*}
   &V_f\\
   =&-\frac{1}{4\pi^2} 
\oint_{\partial \mathbb{D}} \oint_{\partial \mathbb{D}} 
f(z)\, f(\bar{\eta}) \Bigg(
2\left(1-z\bar\eta\right)^{-2}+\frac{4}{z\bar\eta((z\bar\eta)^{2}-1)}+4\left( \frac{1}{z\bar\eta\,(z-1)(\bar\eta-1)} \;-\; \frac{1}{z\bar\eta\,(z\bar\eta-1)} \right)\Bigg) 
\, dz\, d\bar{\eta}.
\end{align*}
The above formula boils down to $V_{f}=\sum_{k=1}^{d}2k|a_{k}|^2$ when $f(z)=\sum_{k=0}^{d}a_{k}z^{k}$ is a polynomial.
\end{Thm}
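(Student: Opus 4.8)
The plan is to prove the theorem in three stages: reduce the analytic test function to a polynomial (equivalently, to a finite linear combination of centered traces of powers of $M$); establish asymptotic Gaussianity by a probabilistic martingale argument; and then identify the limiting variance by the loop-counting combinatorics that is the heart of the paper. Throughout write $T_k := \Tr M^k - \mathbb{E}[\Tr M^k]$, so that for a polynomial $f(z)=\sum_k a_k z^k$ one has $L_n^\circ(f)=\sum_{k\geq 1} a_k T_k$ by the identity $\sum_i \lambda_i^k = \Tr M^k$.

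\emph{Stage 1: reduction to traces of powers.} Since $M$ obeys the circular law (Theorem 5.2 of \cite{jana2024spectrum}), its empirical spectral distribution concentrates on $\mathbb{D}$; I will also use that its spectral radius does not exceed $1+o(1)$, so with high probability every eigenvalue lies inside the disk of convergence of $f(z)=\sum_{k\geq 0}a_k z^k$. On this event $\sum_i f(\lambda_i)=\sum_{k\geq 0} a_k \Tr M^k$, hence $L_n^\circ(f)=\sum_{k\geq 1} a_k T_k$. To pass to a genuine polynomial I truncate at a large degree $K$ and bound the tail $\sum_{k>K} a_k T_k$ in $L^2$; this requires only a crude a priori bound on $\Var(T_k)$ that is polynomial in $k$ and uniform in $n$ (itself a by-product of the loop estimates below), together with the geometric decay of $a_k$ forced by analyticity. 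Thus it suffices to prove the CLT for polynomial $f$.

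\emph{Stage 2: Gaussianity via the martingale CLT.} Fix a polynomial $f$, so $L_n^\circ(f)=\sum_{k=1}^{K} a_k T_k$ is a finite linear combination of centered traces. The centrosymmetric constraint $x_{ij}=x_{n+1-i,\,n+1-j}$ leaves roughly $n^2/2$ free (independent) entries, each controlling a pair of antipodal matrix positions (with a single self-paired centre when $n$ is odd). Enumerate these free entries, let $\mathcal{F}_r$ be the $\sigma$-algebra they generate up to index $r$, and set $Y_r=\big(\mathbb{E}[\,\cdot\mid\mathcal{F}_r]-\mathbb{E}[\,\cdot\mid\mathcal{F}_{r-1}]\big)L_n^\circ(f)$, so that $L_n^\circ(f)=\sum_r Y_r$ is a sum of martingale differences. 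I then invoke the martingale central limit theorem: it remains to verify (i) a Lindeberg/Lyapunov negligibility condition for the $Y_r$, which follows from the uniform moment bounds $\mathbb{E}[x_{ij}^p]\leq C_p$ of Condition \ref{cond:matrixcond_real} together with the $1/\sqrt n$ scaling, and (ii) convergence in probability of the conditional-variance sum $\sum_r \mathbb{E}[Y_r^2\mid\mathcal{F}_{r-1}]$ to a deterministic limit. Gaussianity is then automatic, and the limit in (ii) is forced to equal $V_f=\lim_n \Var\big(L_n^\circ(f)\big)$.

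\emph{Stage 3: the variance (combinatorial crux).} It remains to evaluate $V_f=\sum_{k,\ell} a_k\overline{a_\ell}\,\lim_n \Cov(T_k,T_\ell)$. Expanding, $\Cov(\Tr M^k,\Tr M^\ell)$ is a sum over ordered pairs of closed walks --- one of length $k$, one of length $\ell$ --- on the complete graph on $n$ vertices, weighted by $n^{-(k+\ell)/2}$ times the joint cumulant of the corresponding products of entries. The centrosymmetric identification is decisive: the variable attached to an edge $(a,b)$ also occupies the antipodal position $(n+1-a,\,n+1-b)$, so an edge of the first walk may pair either with the identical edge or with its centro-reflected image in the second walk. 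Counting the surviving configurations --- those overlapping loops in which every participating variable is used exactly twice --- produces three families (direct overlaps, reflected overlaps, and mixed overlaps), and these are precisely the three kernels in the integrand of $V_f$; their generating function over $k,\ell$ reassembles the displayed double contour integral. For a polynomial the diagonal evaluation $\lim_n\Cov(T_k,T_\ell)=2k\,\delta_{k\ell}$, equivalently the orthogonality $\tfrac{1}{2\pi i}\oint_{\partial\mathbb{D}} z^{k}\bar z^{\ell}\,\tfrac{dz}{z}=\delta_{k\ell}$, collapses the double integral to $V_f=\sum_{k=1}^d 2k|a_k|^2$.

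\emph{Main obstacle.} The hard part is Stage 3 together with the higher-order suppression implicit in Stage 2: one must show that configurations in which three or more loops mutually overlap, or in which a single variable is used three or more times, carry a lower power of $n$, so that only pairwise (Gaussian) correlations survive and the conditional-variance limit is deterministic. The centrosymmetric constraint makes this bookkeeping subtler than in the i.i.d. Ginibre case, since each free variable governs two antipodal entries and the vertices fixed by the reflection $i\mapsto n+1-i$ (present when $n$ is odd) must be handled separately; carefully tracking which overlaps gain or lose a factor of $n$ under these identifications is exactly where the combinatorial effort lies, and it is the reflected-overlap channel that accounts for the factor $2$ distinguishing $V_f$ from the classical Ginibre variance $\sum_k k|a_k|^2$.
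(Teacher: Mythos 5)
Your proposal is correct in outline, and your Stage 3 is essentially the paper's own argument: the limiting covariances $\Cov\left(\Tr M^k,\Tr M^l\right)\to 2k\,\delta_{kl}$ are obtained by counting paired closed walks in which an edge of one loop may match either the identical edge or its centro-reflected image in the other (the paper's sequential cross-chain merging in its two variants $i_t\mapsto i_t$ and $i_t\mapsto n+1-i_t$, which yield the factor $2k$, plus the intra-chain mergings that produce the even-$k$ corrections), and the contour-integral kernel and the polynomial collapse $V_f=\sum_k 2k|a_k|^2$ are assembled exactly as you describe. Where you genuinely diverge is Stage 2 (and the Stage 1 reduction). The paper never runs a martingale CLT on $M$: it invokes Weaver's theorem \cite{weaver1985centrosymmetric} to conjugate $M$ orthogonally into a block-diagonal matrix with blocks $A+JC$ and $A-JC$; each block has i.i.d. entries, corresponding entries of the two blocks are dependent but uncorrelated, and Gaussianity of $\operatorname{L}_{n}^{\circ}(f)$ for analytic $f$ is then quoted from the authors' prior CLT for correlated block matrices \cite[Theorem 3.2]{jana2025cltles}. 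Your route---martingale differences over the roughly $n^2/2$ free entries, in the spirit of Rider--Silverstein \cite{rider2006gaussian} adapted to the antipodal pairing---is viable in principle and buys self-containedness (no reliance on the cited preprint), but it is much heavier: the conditional-variance concentration in your condition (ii), the uniform-in-$k$ bound on $\Var(T_k)$ needed for your truncation, and the spectral-radius bound $\rho(M)\leq 1+o(1)$ are all asserted rather than proved, and each is a substantial piece of work. The paper's reduction sidesteps all three at once: the spectral-radius control comes for free from the block form (since $A\pm JC$ are i.i.d.-entry matrices), and analytic $f$ is handled by writing $\operatorname{L}_{n}^{\circ}(f)$ as a contour integral of the centered resolvent trace rather than by truncating a power series. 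In short: same combinatorial core for the variance, but you missed the structural trick---orthogonal block-diagonalization---that lets the paper outsource the Gaussianity and confine the new work to the variance identification.
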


The proof of the above theorem is split into two parts. In the first part, we argue that the limit of $\operatorname{L}_{n}^{\circ}(f)$ is a Gaussian distribution. Then we find the expression of $V_{f}$ separately. Below, we outline the proof of the first part. It is essentially a consequence of \cite[Theorem 3.2]{jana2025cltles}, which proves the CLT of LES of a block diagonal matrix, where the blocks are correlated. Our strategy is to convert the centrosymmetric matrix into a block diagonal format, then apply \cite[Theorem 3.2]{jana2025cltles}.
\begin{proof}
   Assuming the centrosymmetric matrix $M$ is of even order, we may write it in block form as
   $$M  = \left(\begin{array}{cc}
		A & B\\
		C & D
	\end{array}\right)_{n\times n},$$
where $A, B, C, D$ are $n/2\times n/2$ sized matrices. Then by Weaver's theorem \cite[Theorem 9]{weaver1985centrosymmetric}\label{orthogonal}, the following matrix
$$\mathcal{M}=\left(\begin{array}{cc}
	A+J C  & 0 \\
	0 & A-J C
    \end{array}\right)$$
is orthogonally similar to the original matrix $M$. Here $J$ denotes the counter identity matrix, i.e., the square matrix with ones on the counter-diagonal and zeros elsewhere. Notice that the matrices $A+JC$ or $A- JC$ are matrices of i.i.d. entries individually. However, the $(i,j)$th entry of $A+JC$ and the corresponding $(i,j)$th entry of $A-JC$ are dependent, though uncorrelated. Therefore, by applying \cite[Theorem 3.2]{jana2025cltles} under the assumptions that the correlation coefficients are zero and the test function is analytic over $\mathbb{D}$, we conclude the proof.

If the size of the centrosymmetric matrix $M$ is odd, then also by the same Weaver's theorem, we can make $M$ orthogonally similar to a block diagonal matrix with one extra row and column added to the first block $A+JC.$ The rest of the analysis is similar to above.
\end{proof}
We can verify the result through numerical simulation; see Figure \ref{fig:CLT_thm}.
\begin{figure}[H]\label{5}
    \centering
    \includegraphics[scale=0.7]{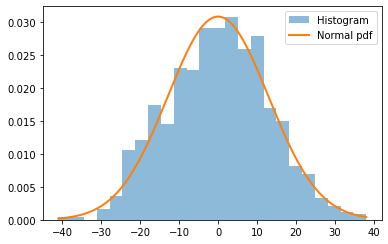}
    \caption{Histogram of $\operatorname{L}_{n}^{\circ}(f)$ for a $4000\times 4000$ random centrosymmetric matrix, averaged over 750 samples, where matrix entries are Gaussian random variables and test function is $f(x)=x^2+4x^5$. The sample variance of $\operatorname{L}_{n}^{\circ}(f)$ is 166.7225, which is approximately same as suggested by the Proposition \ref{Prop:Var}.}
    \label{fig:CLT_thm}
\end{figure}

We now shift our focus to find the expression of the limiting variance $V_{f}.$ Indeed, the expression of the limiting variance $V_f$ can also be derived from \cite[Theorem 3.2]{jana2025cltles}. However, there is an interesting hidden combinatorial structure associated to the expression of $V_{f},$ which was not analyzed in \cite[Theorem 3.2]{jana2025cltles}. Here we explore the combinatorial structure to derive the expression of $V_{f}.$


\section{Calculation of variance}\label{sec:variance calculation}
The main contribution of this paper is the following proposition, which gives the final expression of the limiting variance.
\begin{prop}\label{Prop:Var} 
Let $\operatorname{L}_{n}^{\circ}(f)$ be same as in Theorem \ref{thm: main theorem}, and $V_{f}$ be its limiting variance. Then 
\begin{align*}
   & V_{f}\\ =& -\frac{1}{4\pi^2} 
\oint_{\partial \mathbb{D}} \oint_{\partial \mathbb{D}} 
f(z)\, f(\bar{\eta}) \Bigg(
2\left(1-z\bar\eta\right)^{-2}+\frac{4}{z\bar\eta((z\bar\eta)^{2}-1)}+4\left( \frac{1}{z\bar\eta\,(z-1)(\bar\eta-1)} \;-\; \frac{1}{z\bar\eta\,(z\bar\eta-1)} \right)\Bigg) 
\, dz\, d\bar{\eta},
\end{align*}
when $f:\mathbb{C}\to\mathbb{C}$ is a complex analytic function. In particular, 
\begin{align*}
    V_{f}=\sum_{k=1}^{d} 2k|a_{k}|^2,
\end{align*}
when $f(z)=\sum_{k=1}^{d} a_{k} z^{k}$ is a polynomial of degree $d \geq 1$.
\end{prop}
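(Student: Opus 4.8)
My plan is to reduce $V_f$ to the limiting covariances of traces and then to extract those covariances by a direct moment expansion, organizing the bookkeeping around closed walks (``loops'') in the complete weighted graph on the index set. Since $f$ is analytic, writing $\sum_i\lambda_i^k=\Tr(M^k)$ and approximating $f$ by its Taylor polynomials on a neighbourhood of $\overline{\mathbb D}$, it suffices to determine
\[
c_{k,l}:=\lim_{n\to\infty}\Cov\big(\Tr(M^k),\,\Tr(M^l)\big),
\]
after which $V_f=\sum_{k,l\ge1}c_{k,l}\,a_k a_l$ for $f(z)=\sum_k a_k z^k$, and the double contour integral in the statement is recovered by recognizing the bracketed kernel as the generating object whose contour moments against $z^k\bar\eta^l$ are exactly the $c_{k,l}$, via Cauchy's formula on $\partial\mathbb D$. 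Using the Weaver decomposition from the previous proof, $\Tr(M^k)=\Tr(P^k)+\Tr(Q^k)$ with $P=A+JC$ and $Q=A-JC$, where $A,C$ are independent $\tfrac n2\times\tfrac n2$ matrices with i.i.d.\ entries, so that $c_{k,l}$ splits into two self-covariances and one cross-covariance.

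For each trace I would use the walk expansion $\Tr(M^k)=\sum_{w}\prod_{e\in w}M_e$ over closed walks $w=(i_1,\dots,i_k,i_1)$ in the complete graph on $\{1,\dots,n\}$, where the centrosymmetry $x_{ab}=x_{n+1-a,\,n+1-b}$ means the weights $M_e$ are constant on reflection-pairs of edges and independent across distinct pairs. The covariance of two traces is then a sum over pairs of loops $(w_1,w_2)$ of lengths $k$ and $l$, and the mean-zero, bounded-moment hypotheses force every edge-class (an edge together with its reflection) to be visited an even number of times and force the union graph $w_1\cup w_2$ to be connected for a nonzero contribution to the covariance. Weighing the number of free vertices against the normalization $n^{-(k+l)/2}$ shows that only the maximally overlapping configurations survive in the limit: the two loops must traverse a common cycle, either directly or through the reflection, in opposite orientations. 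Enumerating these overlapping loops is what produces the factor $k$ and the constraint $k=l$ within a single block.

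Assembling the pieces, the two self-covariances each contribute $k\,\delta_{k,l}$ --- the standard i.i.d.\ circular-law count, which assembles into the $2(1-z\bar\eta)^{-2}$ kernel. In the cross-covariance the sign flip in $Q=A-JC$ renders the basic single-edge pairing weight $\Cov(P_{ab},Q_{ab})=\tfrac1n-\tfrac1n=0$, so the reflection-coupled overlaps survive only through lower-order configurations; these are precisely what resum into the three remaining rational kernels of the general formula. Since those kernels have vanishing contour moments against the monomials $z^k$ ($k\ge1$), they drop out for a polynomial test function, leaving $c_{k,l}=2k\,\delta_{k,l}$ and hence $V_f=\sum_{k}2k|a_k|^2$. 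A cheap consistency check at $k=l=1$ is that $\Tr(M)=n^{-1/2}\sum_i x_{ii}$ with the diagonal entries paired by centrosymmetry gives $\Var(\Tr(M))\to2$, matching $2k\,\delta_{k,l}$.

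The main obstacle is the combinatorial classification and estimation in the loop expansion: proving that, in the complete graph with growing degrees, every configuration other than the maximally overlapping ones is negligible as $n\to\infty$ --- which in particular requires showing that higher-cumulant contributions (any dependence on $\mathbb E[x_{ij}^4]$) cancel, so that the answer is universal --- and then correctly enumerating the reflection-coupled overlaps that generate the cross kernels. A secondary delicate point is the contour bookkeeping when passing from the discrete sums to the double integral and when specializing to polynomials, since the extra kernels carry boundary singularities at the roots of unity $z\bar\eta=\pm1$, $z=1$, $\eta=1$; the residue evaluation must therefore be performed with a fixed, consistent nesting of the two contours so that these boundary poles are treated uniformly.
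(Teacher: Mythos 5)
Your route is genuinely different from the paper's. The paper performs the moment combinatorics directly on the centrosymmetric matrix $M$: it classifies intra/cross/partial chain mergings of the index chains in $\Tr(M^k)$ and $\Tr(M^k)\Tr(M^l)$, with the reflection $x_{ij}=x_{n+1-i,n+1-j}$ entering as a second admissible way of copying one half-chain onto another (this is the source of the factors of $2$ in Lemmas \ref{lemma: one_chain_real}, \ref{lemma:not equal power_real} and \ref{lemma:equal power_real}). You instead push the Weaver block diagonalization through the variance computation, writing $\Tr(M^k)=\Tr(P^k)+\Tr(Q^k)$ with $P=A+JC$, $Q=A-JC$ two $\tfrac n2\times\tfrac n2$ matrices of i.i.d.\ entries of variance $2/n$, so that the two self-covariances are each the standard i.i.d.\ loop count $k\,\delta_{kl}$ and the cross-covariance dies because $\Cov(P_{ab},Q_{cd})=0$ entrywise, any connection between the two loops being forced through higher joint moments that lose degrees of freedom. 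This is precisely the alternative the paper acknowledges (via \cite{jana2025cltles}) and deliberately avoids in order to exhibit the reflection combinatorics; your version absorbs the centrosymmetry once and for all into the block reduction, and it does deliver $\Cov(\Tr M^k,\Tr M^l)\to 2k\,\delta_{kl}$ and hence $V_f=\sum_k 2k|a_k|^2$ for polynomials. Your $k=l=1$ sanity check is also correct.

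There is, however, a concrete error in the step meant to recover the stated integral kernel. The three kernels beyond $2(1-z\bar\eta)^{-2}$ do \emph{not} come from the $P$--$Q$ cross-covariance: that cross term is $o(1)$ and resums to nothing. In the paper they arise because $\mathbb{E}[\Tr\mathcal R_z^{\circ}(M)\Tr\mathcal R_{\bar\eta}^{\circ}(M)]$ is expanded through the \emph{uncentered} products $\mathbb{E}[\Tr M^k\Tr M^l]$ after subtracting only the $n^2/(z\bar\eta)$ part of the product of means; they encode the deterministic bias $\mathbb{E}[\Tr M^k]\to 2$ for even $k$ (in your picture $\mathbb{E}\Tr P^k+\mathbb{E}\Tr Q^k\to 1+1$), not any surviving correlation. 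A fully centered computation such as yours produces only the first kernel, so to land on the displayed four-kernel formula you must either reproduce the paper's partial centering or prove that the extra kernels integrate to zero against $f(z)f(\bar\eta)$. The latter is not automatic and is in fact false under the Laurent-series reading of the contour integrals used to derive the kernel: for instance $\frac{4}{z\bar\eta((z\bar\eta)^{2}-1)}=4\sum_{m\ge1}(z\bar\eta)^{-2m-1}$ has \emph{non}vanishing moment against $z^{2m}\bar\eta^{2m}$. So your closing claim that these kernels have vanishing contour moments against the monomials is unjustified as stated, and the contour/centering bookkeeping you defer to a "secondary delicate point" is exactly where your argument currently has its gap.
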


To prove Proposition \ref{Prop:Var}, we need to evaluate the terms $\mathbb{E}[\Tr(M^{k})]$ and $\mathbb{E}[\Tr(M^{k})\Tr(M^{l})]$. We use combinatorial arguments to evaluate these terms.

\subsection{Analyzing expected product}

We evaluate the following terms 
$$
\mathbb{E}[\mathrm{Tr}(M^{k})] = \frac{1}{n^{k/2}}\sum_{i_{1},i_{2},\dots,i_{k}=1}^{n}\mathbb{E}[x_{i_{1}i_{2}}x_{i_{2}i_{3}} \dots x_{i_{(k-1)}i_{k}}x_{i_{k}i_{1}}],
$$
and
$$\mathbb{E}[\Tr(M^{k})\Tr(M^{l})]=\frac{1}{n^{(k+l)/2}} \sum_{{i_{1},i_{2},\dots i_{k} , j_{1}, j_{2}, \dots j_{l}=1}}^{n} \mathbb{E}\left[(x_{i_{1}i_{2}}x_{i_{2}i_{3}}\dots x_{i_{k}i_{1}})( x_{j_{1}j_{2}} x_{j_{2}j_{3}}\dots  x_{j_{l}j_{1}})\right],$$

where $x_{ij}s$  are random variables from condition \eqref{cond:matrixcond_real}. The random variables $x_{i_{1}i_{2}}, x_{i_{2}i_{3}}, \dots, x_{i_{k}i_{1}}$ are connected through their indices $i_{1}, i_{2}, \ldots, i_{k}$, which play an important role in the forthcoming analysis. For this, we introduce the notion of an \textit{index chain} as follows.

\begin{defn}[Index chains]
The unique sequence $i_{1}\to i_{2}\to\cdots\to i_{k}\to i_{1}$ of indices in the product $x_{i_{1}i_{2}}$ $x_{i_{2}i_{3}}$ $\cdots$ $ x_{i_{k}i_{1}}$ is called index chain.
    \begin{enumerate}[label=(\alph*)]
        \item Single chain: In the computation of 
          $$\mathbb{E}[\Tr(M^{k})]= \frac{1}{n^{k/2}}\sum_{i_{1},i_{2},\dots,i_{k}=1}^{n}\mathbb{E}[x_{i_{1}i_{2}}x_{i_{2}i_{3}} \dots x_{i_{(k-1)}i_{k}}x_{i_{k}i_{1}}],$$ 
          the product $(x_{i_{1}i_{2}}x_{i_{2}i_{3}} \dots x_{i_{(k-1)}i_{k}}x_{i_{k}i_{1}})$ is referred as a single chain.
        \item Double chain: In the computation of 
          $$\mathbb{E}[\Tr(M^{k})\Tr(M^{l})]=\frac{1}{n^{(k+l)/2}} \sum_{{i_{1},i_{2},\dots i_{k} , j_{1}, j_{2}, \dots j_{l}=1}}^{n} \mathbb{E}\left[(x_{i_{1}i_{2}}x_{i_{2}i_{3}}\dots x_{i_{k}i_{1}})( x_{j_{1}j_{2}} x_{j_{2}j_{3}}\dots  x_{j_{l}j_{1}})\right],$$ the product
         $(x_{i_{1}i_{2}}x_{i_{2}i_{3}}\dots x_{i_{k}i_{1}})( x_{j_{1}j_{2}} x_{j_{2}j_{3}}\dots  x_{j_{l}j_{1}})$ is refereed as double chain.
    \end{enumerate}
\end{defn}
Indices in an index chain take values from the set ${1,2,\ldots,n}.$ But while computing $\mathbb{E}[\Tr(M^{k})]$ and $\mathbb{E}[\Tr(M^{k})\Tr(M^{l})]$, not all indices are independent; some are fixed once others are chosen. We call the independent ones \textit{free indices}, and their count the \textit{degrees of freedom}.

We now evaluate $\mathbb{E}[\Tr(M^{k})]$ and $\mathbb{E}[\Tr(M^{k})\Tr(M^{l})].$ We begin with the single chain expectation in Subsection \ref{sec:Real_Single_Chain_Expectation}, and then proceed to the double chain expectation in Subsection \ref{sec:Real_Double_Chain_Expectation}.

\subsection{Single Chain Expectation}\label{sec:Real_Single_Chain_Expectation} Here we compute the expectation of the single chain, which is expressed as follows
$$
\mathbb{E}[\mathrm{Tr}(M^{k})] = \frac{1}{n^{k/2}}\sum_{i_{1},i_{2},\dots,i_{k}=1}^{n}\mathbb{E}[x_{i_{1}i_{2}}x_{i_{2}i_{3}} \dots x_{i_{(k-1)}i_{k}}x_{i_{k}i_{1}}].
$$

In the above sum product of random variables, it is possible that some of the random variables in a product are equal to each other. Let us consider two extreme cases; all are equal, and all are different. In the second case, the expectation is zero due to the fact that $\mathbb{E}[x_{ij}]=0$ by Condition \ref{cond:matrixcond_real}. In the first case, the asymptotic expectation  contribution is $n^{-k/2}\sum_{i=1}^{n}\mathbb{E}[x_{ii}^{k}]=O(n^{-k/2+1}),$ due the bounded moment assumption from Condition \ref{cond:matrixcond_real}. This term is asymptotically zero if $k>2.$  We observe that making more random variables equal reduces the degrees of freedom in the index chain. Therefore, to obtain a larger and more significant contribution, we make fewer random variables equal preferably two.

Now, before proceeding to evaluate the expectation of a single chain, we define certain types of mergings that will be needed in both single and double chain expectations, as follows. If the random variables of a chain pair up with the random variables from the same chain, we call this an \textit{intra chain merging}. If all the random variables find their pairs in the other chain, we call this a \textit{cross chain merging}. Finally, if some random variables find their pairs within the same chain while others pair with variables in the other chain, we call this a \textit{partial chain merging}. These types of mergings will be used throughout this Section \ref{sec:variance calculation}.

We now proceed to evaluate $\mathbb{E}[x_{i_{1}i_{2}}x_{i_{2}i_{3}} \dots x_{i_{(k-1)}i_{k}}x_{i_{k}i_{1}}]$ using intra chain merging. There are two possible types of intra chain merging; sequential intra chain merging and non-sequential intra chain merging, which are explained below. We use a similar argument as in the complex valued double chain calculation in \cite[Section 7.3]{jana2024spectrum}.
The only difference is that, in the double chain expectation, there is a sequential cross chain merging of two distinct chains. For example,  
$$
(x_{i_{1}i_{2}}x_{i_{2}i_{3}} \dots x_{i_{(k-1)}i_{k}}x_{i_{k}i_{1}})
(x_{j_{1}j_{2}}x_{j_{2}j_{3}} \dots x_{j_{(l-1)}i_{l}}x_{i_{l}i_{1}}).
$$  
Here, however, we have a single chain. We will treat the first half of the chain as one chain and the second half as another, for example,  
$$
(x_{i_{1}i_{2}}x_{i_{2}i_{3}} \dots x_{i_{k/2}i_{(k/2)+1}})
(x_{i_{(k/2)+1}i_{(k/2)+2}} \dots x_{i_{(k-1)}i_{k}}x_{i_{k}i_{1}}).
$$  
$$
\mathbb{E}[\Tr(M^{10})] = \frac{1}{n^{5}}\displaystyle \sum_{i_{1},i_{2},\dots,i_{10}=1}^{n}\mathbb{E}[x_{i_{1}i_{2}}x_{i_{2}i_{3}} \dots x_{i_{9}i_{10}}x_{i_{10}i_{1}}].
$$

We now discuss in detail both sequential and non-sequential intra chain mergings as follows.

\textbf{Case 1 (Sequential intra chain merging):} In sequential intra chain merging of the chain 
$$
\left(x_{i_{1}i_{2}}x_{i_{2}i_{3}}x_{i_{3}i_{4}}x_{i_{4}i_{5}}x_{i_{5}i_{6}}\right)\left(x_{i_{6}i_{7}}x_{i_{7}i_{8}}x_{i_{8}i_{9}}x_{i_{9}i_{10}}x_{i_{10}i_{1}}\right),
$$
if $x_{i_{t}i_{t+1}}$, an element from the first half of the chain, gets paired with $x_{i_{p}i_{p+1}}$, an element from the second half of the chain, then $x_{i_{t+l}i_{t+1+l}}$ will be paired with $x_{i_{p+l}i_{p+1+l}}$ for all possible $t$ and $l$, see Figure \ref{fig:k=10,Sequential intra chain merging}. We prove that this type of merging gives a nonzero asymptotic contribution.

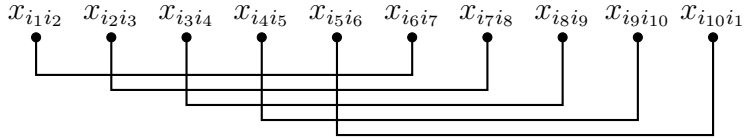
\begin{figure}[h]
    \begin{center}
\begin{tikzpicture}

   \def\x{0}
   \def\y{0}
  \filldraw (\x, \y) circle (0.06);
   \node[anchor = south] at (\x, \y) {$x_{i_{1}i_{2}}$};
  \filldraw (\x+1, \y) circle (0.06);
   \node[anchor = south] at (\x+1, \y) {$x_{i_{2}i_{3}}$};
  \filldraw (\x+2, \y) circle (0.06);
   \node[anchor = south] at (\x+2, \y) {$x_{i_{3}i_{4}}$};
  \filldraw (\x+3, \y) circle (0.06);
   \node[anchor = south] at (\x+3, \y) {$x_{i_{4}i_{5}}$};
  \filldraw (\x+4, \y) circle (0.06);
   \node[anchor = south] at (\x+4, \y) {$x_{i_{5}i_{6}}$};
  \filldraw (\x+5, \y) circle (0.06);
   \node[anchor = south] at (\x+5, \y) {$x_{i_{6}i_{7}}$};
  \filldraw (\x+6, \y) circle (0.06);
   \node[anchor = south] at (\x+6, \y) {$x_{i_{7}i_{8}}$};
  \filldraw (\x+7, \y) circle (0.06);
   \node[anchor = south] at (\x+7, \y) {$x_{i_{8}i_{9}}$};
  \filldraw (\x+8, \y) circle (0.06);
  \node[anchor = south] at (\x+8, \y) {$x_{i_{9}i_{10}}$};
   \filldraw (\x+9, \y) circle (0.06);
  \node[anchor = south] at (\x+9, \y) {$x_{i_{10}i_{1}}$};
 \draw[thick] (\x, \y) -- (\x, \y-0.5) -- (5+\x, \y-0.5) -- (\x+5, \y); 
 \draw[thick] (\x+1, \y) -- (\x+1, \y-0.7) -- (6+\x, \y-0.7) -- (\x+6, \y);
 \draw[thick] (\x+2, \y) -- (\x+2, \y-0.9) -- (7+\x, \y-0.9) -- (\x+7, \y);
  \draw[thick] (\x+3, \y) -- (\x+3, \y-1.1) -- (8+\x, \y-1.1) -- (\x+8, \y);
  \draw[thick] (\x+4, \y) -- (\x+4, \y-1.3) -- (9+\x, \y-1.3) -- (\x+9, \y);
  \end{tikzpicture}
  \end{center}
  \caption{Sequential intra chain merging}
    \label{fig:k=10,Sequential intra chain merging}
\end{figure}

Now, by using the similar argument as in \cite[Case 2(b) in Section 7.3]{jana2024spectrum}, intra chain merging for the first and second halves of the chain,
$i_1 \to i_2 \to i_3 \to i_4 \to i_5 \to i_6 \quad \text{and} \quad i_6 \to i_7 \to i_8 \to i_9 \to i_{10} \to i_1,$
can be done in two ways. One is by copying the first half of the chain over the second as follows;
\begin{align*}
    i_1 \to i_2 \to i_3 \to i_4 \to i_5 \to i_1 \to i_2 \to i_3 \to i_4 \to i_5 \to i_1,
\end{align*}
or
\begin{align*}
    (n+1-i_{1})\rightarrow (n+1-i_{2})\rightarrow \dots \rightarrow (n+1-i_{5})\rightarrow (n+1-i_{1})
\end{align*}
The second possibility is due to the fact that $x_{(i,j)}=x_{(n+1-i,n+1-j)}.$ These are possibilities with $5$ free indices. Combining these, we obtain the total contribution of intra chain merging occurred by the type of merging depicted by the Figure \ref{fig:k=10,Sequential intra chain merging} is $2\times n^{5}.$

\textbf{Case 2 (Non-sequential intra chain merging):}  
By following a similar argument as in \cite[Case 2(b) in Section 7.3]{jana2024spectrum} , it can be shown that if we do not follow sequential intra chain merging and instead try to merge the chain in a non-sequential manner, that is, if the first half of the chain does not merge with the second half in the same order, then it reduces the degrees of freedom. Therefore, this case gives an asymptotically zero contribution.

Consequently,
$$
\lim_{{n \to \infty}} \mathbb{E}[\Tr(M^{10})]
 =\lim_{{n \to \infty}} \frac{2 \times n^5+o(n^5)}{n^5}=2.
$$
Thus, we can conclude from the above cases that for the intra chain merging of the chain  $i_{1} \rightarrow i_{2} \rightarrow \dots \rightarrow i_{k} \rightarrow i_{1}$, the only possibilities with asymptotically non zero contributions are
$$
i_{1}\rightarrow i_{2}\rightarrow \dots \rightarrow i_{k/2} \rightarrow n+1-i_{1}\rightarrow n+1-i_{2} \rightarrow  \dots n+1-i_{k/2}\rightarrow i_{1}
$$
and
$$
i_{1}\rightarrow i_{2}\rightarrow \dots \rightarrow i_{k/2} \rightarrow  i_{1}\rightarrow i_{2}\rightarrow \dots \rightarrow i_{k/2} \rightarrow i_{1}.
$$
All other, except these two, give zero asymptotic contribution due to reduction in degree of freedom. The above observation can be generalized into the following lemma.
\begin{lemma}{\label{lemma: one_chain_real}}
Let $M$ be the matrix as in Theorem \ref{thm: main theorem}. Then we have
    \begin{align}
    \mathbb{E}[\Tr(M^k)]
=\left\{\begin{array}{ll}
 2  & \text{ if } k= 2,4,\dots \\
0  & \text{ if } k= 1,3,\dots
\end{array}\right. + o(1) .\notag
\end{align}
\end{lemma}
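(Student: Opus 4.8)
The plan is to prove the lemma by the moment (loop-counting) method, estimating the sum
$$
\mathbb{E}[\Tr(M^k)] = \frac{1}{n^{k/2}}\sum_{i_1,\dots,i_k=1}^n \mathbb{E}[x_{i_1 i_2} x_{i_2 i_3}\cdots x_{i_k i_1}]
$$
directly, reading each summand as a closed walk $i_1 \to i_2 \to \cdots \to i_k \to i_1$ whose steps carry the edge-variables $x_{i_t i_{t+1}}$, subject to the centrosymmetric identification $x_{ab}=x_{n+1-a,\,n+1-b}$. First I would isolate which index chains contribute at leading order. Since the distinct variables (equivalence classes under the identification) are independent and mean zero, any chain in which some class occurs exactly once has vanishing expectation; by the bounded-moment hypothesis of Condition \ref{cond:matrixcond_real} every surviving expectation is $O(1)$. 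Hence only chains in which every class appears at least twice matter, and the size of the contribution is then governed entirely by the number of free indices (degrees of freedom) $F$, a chain with $F$ free indices contributing at order $n^{F}$.

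Next I would carry out the degrees-of-freedom bound, following the loop-counting argument of \cite[Section 7.3]{jana2024spectrum}. I would split the chain at its midpoint into a first half $i_1\to\cdots\to i_{k/2}$ and a second half, and analyze the intra chain mergings between the two halves exactly as in the worked case $k=10$ above. The combinatorial fact to establish is that $F\le k/2$ always, and, for even $k$, that this maximum is attained by \emph{exactly two} chains: the sequential \emph{direct copy} $i_1\to\cdots\to i_{k/2}\to i_1\to\cdots\to i_{k/2}\to i_1$, and the sequential \emph{centrosymmetric copy} $i_1\to\cdots\to i_{k/2}\to (n+1-i_1)\to\cdots\to(n+1-i_{k/2})\to i_1$, the latter being available precisely because of the identification $x_{ab}=x_{n+1-a,\,n+1-b}$. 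Every non-sequential merging, and every chain carrying a class of multiplicity $\ge 3$, strictly lowers $F$ and hence contributes $o(n^{k/2})$.

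Combining the two pieces, for even $k$ each of the two maximal mergings contributes $n^{k/2}(1+o(1))$, so
$$
\mathbb{E}[\Tr(M^k)] = \frac{2\,n^{k/2}+o(n^{k/2})}{n^{k/2}} = 2 + o(1).
$$
For odd $k$ no merging can pair up all $k$ variables, since an odd total cannot be covered by classes of even size; forbidding multiplicity $1$ forces some class to have odd multiplicity $\ge 3$, which leaves at most $(k-1)/2$ distinct classes and hence $F\le (k-1)/2$. Therefore $\mathbb{E}[\Tr(M^k)] = O\!\left(n^{(k-1)/2 - k/2}\right) = O(n^{-1/2}) = o(1)$, in agreement with the even-only survival asserted by the lemma.

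The main obstacle will be the combinatorial heart of the second step: proving rigorously that the two sequential mergings are the only chains realizing the maximal $k/2$ free indices and that all competitors are strictly suboptimal. This is exactly where centrosymmetry changes the count relative to an ordinary i.i.d. matrix: each edge-pairing may be either \emph{parallel} (equal index pairs) or \emph{antipodal} (centrosymmetric images), and it is the antipodal option that supplies the second maximal merging and thus the factor $2$ rather than $1$. Verifying that antipodal pairings placed in non-sequential positions fail to preserve the degree count, and that mixed parallel/antipodal patterns create no further maximal configurations, is the delicate bookkeeping that the loop-counting graph argument must carry out.
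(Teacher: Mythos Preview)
Your proposal is correct and follows essentially the same approach as the paper: both argue via the moment/loop-counting method that mean-zero and bounded moments force every surviving chain to have each variable-class appearing at least twice, reduce the problem to counting degrees of freedom, identify for even $k$ the two sequential intra-chain mergings (the direct copy and the centrosymmetric copy $i_t\mapsto n+1-i_t$) as the unique configurations attaining $k/2$ free indices, and dispose of odd $k$ by noting that some class must then have multiplicity $\ge 3$, dropping the degree count below $k/2$. Your closing paragraph correctly pinpoints the same combinatorial step the paper defers to the worked example and to \cite[Section~7.3]{jana2024spectrum}.
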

\begin{proof}
There are two distinct scenarios: one where $k$ is odd, and the other where $k$ is even. We provide the proofs separately for each case.

\textbf{Case 1: When $k$ is odd:} 
As we observed at the beginning of Section \ref{sec:Real_Single_Chain_Expectation}, making all random variables unequal gives zero contribution. The significant contribution comes from the case when there is no random variables with single power (because $\mathbb{E}[x_{ij}]=0$) i.e., the random variables must appear with at least power $2.$ If all the the random variables are having power exactly equal to $2,$ then the degree of freedom in the index chain is $k/2.$ But if there are total odd many random variables, i.e., $k$ odd, there will be at least one random variable having power at least $3,$ and all other random variables having power at least $2.$ This will make the degree of freedom of the index chain strictly less than $k/2$. As a result, the asymptotic contribution is is $o(1).$

\textbf{Case 2: When $k$ is even:} In the scenario where the matrix size is even, that is, $k = 2r$, we have

$$
\mathbb{E}[\text{Tr}(M^{2r})] = \frac{1}{n^{r}}\sum_{i_{1},i_{2},\dots,i_{2r}=1}^{n}\mathbb{E}[x_{i_{1}i_{2}}x_{i_{2}i_{3}} \dots x_{i_{(2r-1)}i_{2r}}x_{i_{2r}i_{1}}].
$$

    For the intra chain merging of the chain $i_{1} \rightarrow i_{2} \rightarrow \dots \rightarrow i_{2r} \rightarrow i_{1}$, the only possible $k$-tuples with asymptotically non zero contributions are
$$
i_{1}\rightarrow i_{2}\rightarrow \dots \rightarrow i_{r} \rightarrow  i_{1}\rightarrow i_{2}\rightarrow \dots \rightarrow i_{r} \rightarrow i_{1}
$$
and
$$
i_{1}\rightarrow i_{2}\rightarrow \dots \rightarrow i_{r} \rightarrow n+1-i_{1}\rightarrow n+1-i_{2} \rightarrow  \dots n+1-i_{r}\rightarrow i_{1}
$$
All other, except these two, give zero asymptotic contribution because of reduction in degree of freedom, see section \ref{sec:Real_Single_Chain_Expectation}.
Thus,
\begin{align}
   \mathbb{E}[\Tr(M^k)]\notag
=\left\{\begin{array}{ll}
 2  & \text{ if } k= 2,4,\dots \\
0  & \text{ if } k= 1,3,\dots
\end{array}\right. + o(1).
\end{align}
\end{proof}

\subsection{Double Chain Expectation}\label{sec:Real_Double_Chain_Expectation}
Here, we consider the expectations of the form

$$
\frac{1}{n^{(k+l)/2}} \sum_{{i_{1},i_{2},\dots i_{k} , j_{1}, j_{2}, \dots j_{l}=1}}^{n} \mathbb{E}\left[(x_{i_{1}i_{2}}x_{i_{2}i_{3}}\dots x_{i_{k}i_{1}})( x_{j_{1}j_{2}} x_{j_{2}j_{3}}\dots  x_{j_{l}j_{1}})\right].
$$
Here $k, l\in \mathbb{N}$ are the lengths of the respective chains. The random variables from the first chain $x_{i_{1}i_{2}}x_{i_{2}i_{3}}\dots x_{i_{k}i_{1}}$ can be merged with the random variables from the second chain $ x_{j_{1}j_{2}} x_{j_{2}j_{3}}\dots  x_{j_{l}j_{1}}$ in three different ways; intra chain merging, cross chain merging, and partial chain merging. However, notice that all three possible pairings depend on the lengths of the chains. This is elaborated further in the Lemmas \ref{lemma:not equal power_real} and \ref{lemma:equal power_real}.
\begin{lemma}{\label{lemma:not equal power_real}} Let $M$ be same as in Theorem \ref{thm: main theorem}. Then we have
    \begin{align}
       & \mathbb{E}\left[\Tr(M^k)\Tr(M^l)\right]\notag\\
        &=\left\{\begin{array}{ll}
 0 & \text{if } k \neq l \text{ and both are odd }\\
4 & \text{ if } k \neq l \text{ and both are even }
\end{array}\right. + o(1) .\notag
    \end{align}
\end{lemma}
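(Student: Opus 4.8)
The plan is to show that when $k\neq l$ the cross chain and partial chain mergings contribute only $o(1)$, so that $\mathbb{E}[\Tr(M^k)\Tr(M^l)]$ asymptotically factorizes as $\mathbb{E}[\Tr(M^k)]\,\mathbb{E}[\Tr(M^l)]$; Lemma \ref{lemma: one_chain_real} then finishes the computation, giving $2\cdot 2=4$ when both $k,l$ are even and $0$ when both are odd. First I would reduce to exact pairings. By the mean-zero and bounded-moment hypotheses of Condition \ref{cond:matrixcond_real}, any configuration in which a random variable appears with power $1$ is killed by $\mathbb{E}[x_{ij}]=0$, while any configuration in which some variable appears with power $\geq 3$ strictly reduces the number of free indices and is therefore $o(1)$, exactly as in the single chain analysis. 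Hence only mergings in which each of the $(k+l)/2$ distinct random variables appears exactly twice can contribute at leading order.

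Next I would encode each surviving merging as a graph $G$ whose vertices are the distinct index values and whose edges are the distinct random variables, taking into account the centrosymmetric identification $x_{ij}=x_{n+1-i,\,n+1-j}$. The two index chains become closed walks $W_1,W_2$ of lengths $k,l$ that together traverse each edge of $G$ exactly twice, so $G$ has exactly $(k+l)/2$ edges. Because $M$ is not symmetric, a random variable can only be revisited by traversing the same directed edge again or its centrosymmetric reflection: a pendant (tree) edge of a closed walk is forced to be traversed once in each direction, which would pair $x_{uw}$ with $x_{wu}$ only under the constraint $u+w=n+1$, and that constraint removes the freedom of the leaf vertex. Consequently every connected component of $G$ must carry at least one cycle, the cyclomatic number is at least the number of components, and the count of free indices never exceeds the number of edges, namely $(k+l)/2$.

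Then I would analyze when the bound $(k+l)/2$ is attained, which is where the hypothesis $k\neq l$ enters. Equality forces every component of $G$ to be a single cycle with no trees attached. If a cross pair is present, then $W_1$ and $W_2$ share an edge, so $G$ is connected; being a single cycle $C$, both closed walks wind consistently around $C$, say $a_1$ and $a_2$ times with $a_1+a_2=2$ and $a_1,a_2\geq 1$, which forces $a_1=a_2=1$ and hence $k=|C|=l$ (the centrosymmetric reflection of $C$ has the same length, so this conclusion is unaffected). Since $k\neq l$, no cross or partial merging attains the maximal degree of freedom, and all such mergings are $o(1)$. The only way to reach $(k+l)/2$ free indices is then two disjoint cycles, i.e.\ a pure intra chain merging, in which each chain independently contributes its single chain count together with the factor $2$ from the direct and reflected copies of Lemma \ref{lemma: one_chain_real}; this is possible only when both $k$ and $l$ are even.

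Combining the cases: when $k\neq l$ and both are even, the only surviving contribution is the disjoint intra-intra merging, giving $2\times 2=4$; when $k\neq l$ and both are odd, no even-length cycle decomposition exists, the intra-intra configuration is unavailable, and every merging loses a degree of freedom, giving $0$. I expect the main obstacle to be the honest degree-of-freedom bookkeeping under the centrosymmetric identifications: one must verify that edges forced to be traversed in opposite directions can be paired only by imposing a relation $u+w=n+1$ that kills the freedom of a vertex, so that tree-like pendant structures never beat pure cycles, and that a single cross pair genuinely fuses the two walks into one unicyclic component whose length must equal both $k$ and $l$. This is precisely the overlapping-loop counting advertised in the introduction, and adapting the double chain bookkeeping of \cite[Section 7.3]{jana2024spectrum} to the present setting is the delicate point.
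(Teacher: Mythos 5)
Your proposal is correct and follows essentially the same route as the paper: classify pairings into intra/cross/partial chain mergings, show by degree-of-freedom counting that any configuration involving a cross pair is suppressed when $k\neq l$ (the paper does this by explicitly computing the best partial merging and seeing it lose one power of $n$; you do it via the structural observation that a shared edge fuses the walks into a single cycle whose length must equal both $k$ and $l$), and conclude that only the disjoint intra--intra merging survives, giving $2\times 2=4$ for both even and $0$ for both odd via Lemma \ref{lemma: one_chain_real}. Your graph-theoretic packaging of the bookkeeping is somewhat more systematic than the paper's case enumeration, but the underlying combinatorics is the same.
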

\begin{proof} The lemma estimates the limiting expectation of double chain in two different cases; $k\neq l$ with both odd and $k\neq l$ with both even. We present the proofs in two different parts.

\textbf{Case 1 ($k\neq l$ and both are odd):} Without loss of generality, let us assume $k<l$ and suppose $k=2p+1, \; l=2q+1$. Consider,
\begin{align*}
 &\mathbb{E}\left[ \Tr(M^{2p+1})\Tr(M^{2q+1})\right]\\
 =&\frac{1}{n^{p+q+1}}\displaystyle \sum_{{i_{1},i_{2},\dots i_{(2p+1),} \atop j_{1}, j_{2}, \dots j_{(2q+1)}=1}}^{n}\mathbb{E}\left[(x_{i_{1}i_{2}}\dots x_{i_{(2p+1)}i_{1}})(x_{j_{1}j_{2}}\dots x_{j_{(2p+1)}j_{(2p+2)}}x_{j_{(2p+2)}j_{(2p+3)}}\dots x_{j_{(2q+1)}j_{1}})\right]\\
      \end{align*}
So, the possible pairings for the double chain are as follows.
\begin{enumerate}
\item \textbf{Intra chain merging:} By the case 1 of Lemma \ref{lemma: one_chain_real}, we conclude that the intra chain merging gives zero contribution.
\item \textbf{Cross chain merging:} Since the chain lengths are unequal, cross chain merging is not possible.
\item \textbf{Partial chain merging:} To obtain the maximum contribution from partial chain merging, we divide the second chain into two parts: one with the length of the first chain and one with the remaining elements, as follows
$$(x_{i_{1}i_{2}}x_{i_{2}i_{3}}\dots x_{i_{(2p+1)}i_{1}}) (x_{j_{1}j_{2}}x_{j_{2}j_{3}}\dots x_{j_{(2p+1)}j_{(2p+2)}})(x_{j_{(2p+2)}j_{2p+3}}x_{j_{(2p+3)}j_{2p+4}}\dots x_{j_{(2q+1)}j_{1}}).$$
\end{enumerate}
Now, the first and second chains have sequential cross merging, while the third chain has intra chain merging. All other types of partial mergings have fewer degrees of freedom and therefore contribute less than this scenario. 

By the same argument as in \cite[Case 2(b), Section 7.3]{jana2024spectrum}, it can be shown that the contribution from the sequential cross chain merging of the first and second chains is $2(2p+1)n^{(2p+1)}.$

Now, for intra chain merging of the third chain, using the argument in Section \ref{sec:Real_Single_Chain_Expectation} (Case 2 Non-sequential intra chain merging), we obtain a contribution of 
$n^{q-p-1}$ as follows. The third chain is $j_{2p+2}\rightarrow j_{2p+3}\rightarrow \dots \rightarrow j_{2q+1}\rightarrow j_{1}$. For sequential intra chain merging of this chain, we have the following possibilities.
$$j_{2p+2}\rightarrow j_{2p+3}\rightarrow \dots \rightarrow j_{p+q+1}\rightarrow j_{2p+2}\rightarrow j_{2p+3}\rightarrow \dots \rightarrow j_{p+q+1}$$
and 
$$
j_{2p+2}\rightarrow j_{2p+3}\rightarrow \dots \rightarrow j_{p+q+1}\rightarrow n+1-j_{2p+2}\rightarrow n+1-j_{2p+3}\rightarrow \dots \rightarrow n+1-j_{p+q+1}.
$$
Each of the above pairings has $(q-p-1)$ free variables. Note that when we merged the first and second chains, the variable $j_{2p+2}$ has already been fixed. Therefore, the intra chain merging of the third chain contributes $2n^{q-p-1}$.

 Additionally, we chose the second and third chains from a single chain, and this can be done in $2q+1$ many ways because we can start the second chain in $2q+1$ many way.

Therefore, in total, we have a contribution of
$$(2(2p+1)n^{(2p+1)})\times(2n^{(q-p-1})\times(2q+1)+o(n^{p+q}).$$ 
However, even in this case, $\mathbb{E}\left[ \Tr(M^{2p+1})\Tr(M^{2q+1})\right]$ asymptotically tends to zero because its expression is divided by $n^{(p+q+1)/2}.$
Thus,
\begin{align*}
&\mathbb{E}\left[ \Tr(M^{2p+1})\Tr(M^{2q+1})\right] \\
&= \frac{1}{n^{p+q+1}} \sum_{{i_{1},i_{2},\dots i_{(2p+1)} \atop j_{1}, j_{2}, \dots j_{(2q+1)}=1}}^{n} \mathbb{E}\left[(x_{i_{1}i_{2}}x_{i_{2}i_{3}}\dots x_{i_{(2p+1)}i_{1}})(x_{j_{1}j_{2}}x_{j_{2}j_{3}}\dots x_{j_{(2p+1)}j_{(2p+2)}}\dots x_{j_{(2q+1)}j_{1}})\right] \\
&= \frac{1}{n^{p+q+1}} \sum_{{i_{1},i_{2},\dots i_{(2p+1)} \atop j_{1}, j_{2}, \dots j_{(2q+1)}=1}}^{n} \mathbb{E}\left[(x_{i_{1}i_{2}}x_{i_{2}i_{3}}\dots x_{i_{(2p+1)}i_{1}}) (x_{j_{1}j_{2}}x_{j_{2}j_{3}}\dots x_{j_{(2p+1)}j_{(2p+2)}})\right.\\
&\quad\quad\quad\quad\quad\quad\quad\quad\quad\quad\quad \left. (x_{j_{(2p+2)}j_{2p+3}}x_{j_{(2p+3)}j_{2p+4}}\dots x_{j_{(2q+1)}j_{2}})\right] \\
&= \frac{(2(2p+1)n^{2p+1})(2n^{(q-p-1)})(2q+1)+o(n^{p+q+1})}{n^{p+q+1}} \\
&= \frac{4(2p+1)(2q+1)n^{(p+q)}+o(n^{p+q+1})}{n^{p+q+1}}.
\end{align*}
We now explain the other case.

\textbf{Case 2 ($k \neq l$ and both are even):}
Without loss of generality, let us assume $k < l$ and write $k = 2r$, $l = 2s$ with $r, s \in \mathbb{N}$. Consider
$$\mathbb{E}\left[\Tr(M^{2r})\Tr(M^{2s})\right]
=\frac{1}{n^{r+s}}\displaystyle \sum_{{i_{1},i_{2},\dots i_{2r}, \atop j_{1}, j_{2}, \dots j_{2s}=1}}^{n}\mathbb{E}\left[(x_{i_{1}i_{2}}x_{i_{2}i_{3}}\dots x_{i_{2r}i_{1}})(x_{j_{1}j_{2}}x_{j_{2}j_{3}}\dots x_{j_{2s}j_{1}})\right]$$
Here, the chain lengths are even, so non zero contribution from intra chain merging is possible. From sequential intra chain merging of the first chain, we obtain a contribution of $2n^r$. Similarly, sequential intra chain merging of the second chain gives a contribution of $2n^s$. Therefore, in total, we have a contribution of $4n^{r+s}$. Partial merging, as explained in the previous case, has $o(n^{r+s})$ degrees of freedom and does not contribute asymptotically.
Thus,
$$ \mathbb{E}\left[\Tr(M^{2r})\Tr(M^{2s})\right] = \frac{4n^{r+s}+o(n^{r+s})}{n^{r+s}}.$$

    \end{proof}

\begin{rem}\label{Remark:when one of the powers is odd and other is even_real}
    While computing $\mathbb{E}\left[ \Tr(M^{k})\Tr(M^{l})\right]$, there is another scenario to consider, in addition to Lemma \ref{lemma:not equal power_real}, namely if one of $k$ and $l$ is even, and the other is odd. In this scenario, intra chain merging gives zero asymptotic contribution due to the odd length of one chain. Moreover, partial merging also results in zero asymptotic contribution. Consequently, this case contributes zero asymptotically.
\end{rem}

\begin{lemma}{\label{lemma:equal power_real}}
Let $M$ be same as in Theorem \ref{thm: main theorem}. Then we have 
    \begin{align}
       &\frac{1}{n^k}\mathbb{E}\left[\Tr(M^k)\Tr(M^k)\right]\notag
       =\left\{\begin{array}{ll}
       2k  & \text{if } k \text{ is odd}\\
       2k+4 & \text{ if } k \text{ is even}
   \end{array}\right.  + o(1).\notag
    \end{align}
    \end{lemma}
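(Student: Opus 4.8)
The plan is to expand $\mathbb{E}[\Tr(M^k)\Tr(M^k)]$ as the normalized double-chain sum and classify the dominant pairings according to the three merging types already introduced — intra, cross, and partial — now exploiting the fact that the two chains have the \emph{same} length $k$. Writing
$$\mathbb{E}[\Tr(M^k)\Tr(M^k)] = \frac{1}{n^{k}}\sum_{\substack{i_1,\dots,i_k\\ j_1,\dots,j_k}} \mathbb{E}\left[(x_{i_1 i_2}\cdots x_{i_k i_1})(x_{j_1 j_2}\cdots x_{j_k j_1})\right],$$
I would first isolate the contribution in which each chain merges only within itself. Since the two index sets are then supported on independent random variables, this intra-intra contribution factorizes as $(\mathbb{E}[\Tr(M^k)])^2$, which by Lemma \ref{lemma: one_chain_real} tends to $4$ when $k$ is even and to $0$ when $k$ is odd. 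This is the disconnected part, and it already produces the extra ``$+4$'' seen in the even case.

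The heart of the proof is the cross chain merging, which is available precisely because the chains have equal length. Following the sequential cross-merging count used in \cite[Case 2(b), Section 7.3]{jana2024spectrum} and reprised in Lemma \ref{lemma:not equal power_real}, I would argue that a full sequential cross merging of two length-$k$ cycles contributes $2k\, n^{k}$ to the sum. The factor $k$ comes from the $k$ cyclic alignments of the second walk against the first: after fixing the $k$ free indices $i_1,\dots,i_k$ of a generic $k$-cycle, the second chain is forced to be one of its $k$ rotations, and these introduce no new free indices. The factor $2$ comes from the centrosymmetric constraint $x_{a,b}=x_{n+1-a,\,n+1-b}$, which allows the second chain either to retrace the edges of the first or to retrace their reflections $i_t \mapsto n+1-i_t$; in both cases each underlying variable appears with power exactly two, so the expectation equals one. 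Crucially, this count is insensitive to the parity of $k$, so it yields $2k$ in both the odd and even cases. After dividing by the prefactor $n^{k}$, the cross merging therefore contributes exactly $2k$, and this is what rescues the odd case, whose single-chain mean vanishes.

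Finally I would dispose of the partial chain mergings — configurations in which some edges pair inside a chain and the remaining edges pair across — by the degree-of-freedom argument already employed in Lemma \ref{lemma:not equal power_real}: every such mixed configuration strictly lowers the number of free indices below $k$, so its contribution to the sum is $o(n^{k})$ and hence $o(1)$ after normalization; the same reduction applies to any non-sequential cross merging. Collecting the three pieces gives $\mathbb{E}[\Tr(M^k)\Tr(M^k)] = 2k + 4 + o(1)$ for even $k$ and $2k + o(1)$ for odd $k$, which is the claim.

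I expect the main obstacle to be the rigorous bookkeeping of the cross-merging count. One must verify that only the $2k$ sequential alignments — the $k$ direct rotations and the $k$ reflected ones — attain the maximal $k$ free indices, that the direct and reflected families are asymptotically disjoint (configurations in which a walk coincides with its own reflection are constrained by the fixed points of $i\mapsto n+1-i$ and are of lower order), and that no partial merging covertly recovers $k$ free indices. Isolating the generic $k$-cycles and showing that every index coincidence costs a factor of $n$ is where the combinatorial care is concentrated.
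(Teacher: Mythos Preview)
Your proposal is correct and follows essentially the same approach as the paper: split into intra chain merging (yielding $4$ for even $k$ and $0$ for odd $k$), sequential cross chain merging (yielding $2k$ regardless of parity), and argue that partial or non-sequential mergings are $o(1)$ by loss of degrees of freedom. The only cosmetic difference is that you phrase the intra-intra contribution as the factorization $(\mathbb{E}[\Tr(M^k)])^2$ via Lemma~\ref{lemma: one_chain_real}, whereas the paper writes the raw count $2n^{u}\times 2n^{u}$ directly.
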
  
\begin{proof}
The analysis is slightly different depending on whether $k$ is even or odd. We discuss both cases separately as follows.

\textbf{Case 1 ($k$ is even):} 
Let $k=2u,\; u\in \mathbb{N}.$ 
The expectation of the square of the trace of $M^{2u}$ can be expressed as follows.
\begin{align*}
\mathbb{E}[(\Tr(M^{2u}))^2] 
&= \frac{1}{n^{2u}} \sum_{{i_{1},i_{2},\dots i_{2u} \atop j_{1}, j_{2}, \dots j_{2u}= 1}}^{n}\mathbb{E}\left[(x_{i_{1}i_{2}}x_{i_{2}i_{3}}\dots x_{i_{2u}i_{1}})(x_{j_{1}j_{2}}x_{j_{2}j_{3}}\dots x_{j_{2u}j_{1}})\right].
\end{align*}
Here, the chain lengths are even, so by intra chain merging of both chains, we obtain a contribution of $2n^{u} \times 2n^{u}$. Since the sizes of both chains are equal, we also obtain a significant contribution from sequential cross chain merging, which contributes $2(2u n^{2u})$. All other mergings give asymptotically zero contribution. Therefore, in total, we have $$4n^{2u} + 2(2un^{2u})+o(n^{2u}) = 4(u+1)n^{2u}+o(n^{2u})$$ contribution.

\textbf{Case 2 ($k$ is odd):} 
Let $k=2v+1$, where $v\in \mathbb{Z}^+$.  This case gives the same contribution as in the previous case (Case 1 ($k$ even)), except that here intra chain merging gives zero contribution because the sizes of the chains are odd. Therefore, in this case, we obtain  
$$
2(2v+1)n^{2v+1} + o(n^{2v+1})
$$  
contribution.
\end{proof} 

We now complete the proof of Proposition \ref{Prop:Var} below.

\begin{proof}[Proof of Proposition \ref{Prop:Var}] We first find the expression of $V_{f},$ when $f(z)=\sum_{k=1}^{d}a_{k}z^{k}$ is a polynomial with real coefficients.
Using Lemmas \ref{lemma: one_chain_real}, \ref{lemma:not equal power_real}, \ref{lemma:equal power_real}, and Remark \ref{Remark:when one of the powers is odd and other is even_real} in \eqref{eq:Var_1}, we write the variance as 
\begin{align} \label{eq:Var_1}
&V_{f} \notag \\
&= \Var\left(\sum_{k=2}^{d}a_{k} \Tr(M^{k})\right) \notag \\
&= \sum_{k,l=1}^{d}  a_{k} \bar{a_{l}} \Cov\left(\Tr(M^k),\Tr(M^l)\right) \notag \\
&= \sum_{k,l=1}^{d} a_{k} \bar{a_{l}}\left(\mathbb{E}\left[\Tr(M^k)\Tr(M^l)\right]-\mathbb{E}\left[\Tr(M^k)\right] \mathbb{E}\left[\Tr(M^l)\right]\right) \notag\\
&=\sum_{\substack{k\neq l \\ k,l=1,5,\dots}} 0 + \sum_{\substack{k\neq l \\ k,l=2,4,\dots}}4a_{k} \bar{a_{l}}+\sum_{\substack{k=l \\ k,l=1,5,\dots}}2k a_k \bar{a_l}\\
&\;\;\;+ \sum_{\substack{k=l \\ k,l=2,4,\dots}}2(k+2)a_{k} \bar{a_{l}}-\sum_{k,l=2,4,\dots}4a_{k} \bar{a_{l}}+o(1)\notag\\
&= \sum_{k=1}^d 2k|a_k|^2 + o(1).\notag
\end{align}

We now proceed to calculate the limiting variance $V_{f}$ for analytic test function. Let $f$ be an analytic function on $\mathbb{D}$. The eigenvalue distribution of centrosymmetric random matrices converge to the circular law, hence all eigenvalues $\{\lambda_i(M)\}_{i=1}^n$ lie inside $\mathbb{D}$ with probability $1$ as $n \to \infty$. We can write the LES as follows.
\begin{align}
\operatorname{L}_{n}(f)\notag
&= \sum_{i=1}^{n} f(\lambda_{i}) \notag \\
&= (2\pi i)^{-1} \oint_{\partial \mathbb{D}} \sum_{i=1}^{n} \frac{f(z)}{z-\lambda_{i}}\, dz \notag \\
&= (2\pi i)^{-1} \oint_{\partial \mathbb{D}} f(z) \Tr \mathcal{R}_{z}(M) \, dz, \notag
\end{align}
where in the second equality we have applied Cauchy’s integral formula for analytic functions, and $\mathcal{R}_z(M)= (zI-M)^{-1}$ denotes the resolvent of the matrix $M$.  

Now, the centered LES can be expressed as follows.
$$
\operatorname{L}_{n}^{\circ}(f) = \frac{1}{2\pi i} \oint_{\partial \mathbb{D}} f(z) \Tr {\mathcal{R}}_{z}^{\circ}(M) \, dz.
$$
where $\psi ^{\circ} = \psi -\mathbb{E}[\psi ].$ To compute the variance of $\operatorname{L}_{n}^{\circ}(f)$, we require the correlation of centered resolvent traces
$\mathbb{E}\!\left[\Tr \mathcal{R}_z^{\circ}(M)\, \Tr \mathcal{R}_{\bar\eta}^{\circ}( M)\right].$ We expand $\Tr{\mathcal{R}_z(M)}$ on the boundary $\partial{\mathbb{D}}$ as follows.
$$
\Tr{\mathcal{R}_z(M)} = \sum_{k=0}^\infty z^{-k-1}\Tr M^k,
$$
and similarly for $\Tr \mathcal{R}_{\bar\eta}(M)$. 
Hence by using the Lemmas \ref{lemma:not equal power_real} and \ref{lemma:equal power_real}, we have    
\begin{align*}
    &\mathbb{E}[\Tr \mathcal{R}_{z}^{\circ}(M)\Tr\mathcal{R}_{\bar\eta}^{\circ}(M)]\\
    &=\left\{\mathbb{E}[\Tr \mathcal{R}_{z}(M)\Tr\mathcal{R}_{\bar\eta}(M)]-\frac{n^{2}}{z\bar \eta}\right\}\\
    &=\sum_{k=1}^{\infty}(z\bar \eta)^{-k-1}\mathbb{E}[\Tr M^{k}\Tr M^{k}] + \sum_{\stackrel{k,l=1}{k\neq l}}^{\infty}z^{-k-1}(\bar \eta)^{-l-1}\mathbb{E}[\Tr{M^{k}}\Tr{M^{l}}]\\
    &=2\sum_{k=1}^{\infty}k(z\bar \eta)^{-k-1}+4\sum_{\stackrel{k=2}{k: \text{ even}}}^{\infty}(z\bar \eta)^{-k-1}+4\sum_{\stackrel{k,l=1}{k\neq l}}^{\infty}z^{-k-1}(\bar \eta)^{-l-1}\\
    &=2\left(1-z\bar\eta\right)^{-2}+\frac{4}{z\bar\eta((z\bar\eta)^{2}-1)}+4\left( \frac{1}{z\bar\eta\,(z-1)(\bar\eta-1)} \;-\; \frac{1}{z\bar\eta\,(z\bar\eta-1)} \right).
\end{align*}
Thus, the final expression of the limiting variance becomes
\begin{align*}
&V_f\\
=&\mathbb{E}\left[\big|\operatorname{L}_{n}^{\circ}(f)\big|^2\right]\\ 
=& -\frac{1}{4\pi^2} 
\oint_{\partial \mathbb{D}} \oint_{\partial \mathbb{D}} 
f(z)\, f(\bar{\eta}) \,
\mathbb{E}\!\left[\Tr \mathcal{R}_z^{\circ}(M)\, \Tr \mathcal{R}_{\bar{\eta}}^{\circ}(M)\right] 
\, dz\, d\bar{\eta} \\
=& -\frac{1}{4\pi^2} 
\oint_{\partial \mathbb{D}} \oint_{\partial \mathbb{D}} 
f(z)\, f(\bar{\eta}) \Bigg(
2\left(1-z\bar\eta\right)^{-2}+\frac{4}{z\bar\eta((z\bar\eta)^{2}-1)}+4\left( \frac{1}{z\bar\eta\,(z-1)(\bar\eta-1)} \;-\; \frac{1}{z\bar\eta\,(z\bar\eta-1)} \right)\Bigg) 
\, dz\, d\bar{\eta} \\
\end{align*}
\end{proof}

\subsection*{Acknowledgment} Indrajit Jana's research is partially supported by INSPIRE Fellowship\\DST/INSPIRE/04/2019/000015, Dept. of Science and Technology, Govt. of India.\\

Sunita Rani's research is fully supported by the University Grant Commission (UGC), New Delhi.

\bibliographystyle{abbrv} 

\begin{thebibliography}{10}
	
	\bibitem{adhikari2017fluctuations}
	K.~Adhikari and K.~Saha.
	\newblock Fluctuations of eigenvalues of patterned random matrices.
	\newblock {\em Journal of Mathematical Physics}, 58(6):063301, 2017.
	
	\bibitem{adhikari2018universality}
	K.~Adhikari and K.~Saha.
	\newblock Universality in the fluctuation of eigenvalues of random circulant
	matrices.
	\newblock {\em Statistics \& Probability Letters}, 138:1--8, 2018.
	
	\bibitem{anderson2006clt}
	G.~W. Anderson and O.~Zeitouni.
	\newblock A clt for a band matrix model.
	\newblock {\em Probability Theory and Related Fields}, 134(2):283--338, 2006.
	
	\bibitem{bai2008clt}
	Z.~D. Bai and J.~W. Silverstein.
	\newblock Clt for linear spectral statistics of large-dimensional sample
	covariance matrices.
	\newblock In {\em Advances In Statistics}, pages 281--333. World Scientific,
	2008.
	
	\bibitem{BardenetHardy2020}
	R.~Bardenet and A.~Hardy.
	\newblock Monte carlo with determinantal point processes.
	\newblock {\em The Annals of Applied Probability}, 30(1):368--417, 2020.
	
	\bibitem{BourgadeErdosYau2014}
	P.~Bourgade, L.~Erd{\H{o}}s, and H.-T. Yau.
	\newblock Edge universality of beta ensembles.
	\newblock {\em Communications in Mathematical Physics}, 332(1):261--353, 2014.
	
	\bibitem{chatterjee2009fluctuations}
	S.~Chatterjee.
	\newblock Fluctuations of eigenvalues and second order poincar{\'e}
	inequalities.
	\newblock {\em Probability Theory and Related Fields}, 143(1):1--40, 2009.
	
	\bibitem{cipolloni2022fluctuations}
	G.~Cipolloni.
	\newblock Fluctuations in the spectrum of non-hermitian iid matrices.
	\newblock {\em Journal of Mathematical Physics}, 63(5), 2022.
	
	\bibitem{cipolloni2023central}
	G.~Cipolloni, L.~Erd{\H{o}}s, and D.~Schr{\"o}der.
	\newblock Central limit theorem for linear eigenvalue statistics of
	non-hermitian random matrices.
	\newblock {\em Communications on Pure and Applied Mathematics},
	76(5):946--1034, 2023.
	
	\bibitem{cipolloni2021fluctuation}
	G.~Cipolloni, L.~Erdős, and D.~Schr{\"o}der.
	\newblock {Fluctuation around the circular law for random matrices with real
		entries}.
	\newblock {\em Electronic Journal of Probability}, 26(none):1 -- 61, 2021.
	
	\bibitem{jana2022clt}
	I.~Jana.
	\newblock Clt for non-hermitian random band matrices with variance profiles.
	\newblock {\em Journal of Statistical Physics}, 187(2):13, 2022.
	
	\bibitem{jana2025cltles}
	I.~Jana and S.~Rani.
	\newblock Clt for les of correlated non-hermitian random matrices, 2025.
	\newblock arXiv preprint arXiv:2503.22542.
	
	\bibitem{jana2024spectrum}
	I.~Jana and S.~Rani.
	\newblock Spectrum of random centrosymmetric matrices; clt and circular law.
	\newblock {\em Random Matrices: Theory and Applications}, 14(01):2450026, 2025.
	
	\bibitem{jana2016fluctuations}
	I.~Jana, K.~Saha, and A.~Soshnikov.
	\newblock Fluctuations of linear eigenvalue statistics of random band matrices.
	\newblock {\em Theory of Probability \& Its Applications}, 60(3):407--443,
	2016.
	
	\bibitem{johansson1998fluctuations}
	K.~Johansson.
	\newblock {On fluctuations of eigenvalues of random Hermitian matrices}.
	\newblock {\em Duke Mathematical Journal}, 91(1):151 -- 204, 1998.
	
	\bibitem{Jonsson1982SomeLT}
	D.~Jonsson.
	\newblock Some limit theorems for the eigenvalues of a sample covariance
	matrix.
	\newblock {\em Journal of Multivariate Analysis}, 12:1--38, 1982.
	
	\bibitem{kopel2015linear}
	P.~Kopel.
	\newblock Linear statistics of non-hermitian matrices matching the real or
	complex ginibre ensemble to four moments.
	\newblock {\em arXiv preprint arXiv:1510.02987}, 2015.
	
	\bibitem{li2013central}
	L.~Li and A.~Soshnikov.
	\newblock Central limit theorem for linear statistics of eigenvalues of band
	random matrices.
	\newblock {\em Random Matrices: Theory and Applications}, 02(04):1350009, 2013.
	
	\bibitem{liu2012fluctuations}
	D.~Liu, X.~Sun, and Z.~Wang.
	\newblock Fluctuations of eigenvalues for random toeplitz and related matrices.
	\newblock {\em Electronic Journal of Probability}, 17:1--22, 2012.
	
	\bibitem{lytova2009central}
	A.~Lytova and L.~Pastur.
	\newblock {Central limit theorem for linear eigenvalue statistics of random
		matrices with independent entries}.
	\newblock {\em The Annals of Probability}, 37(5):1778 -- 1840, 2009.
	
	\bibitem{maurya2021fluctuations}
	S.~N. Maurya and K.~Saha.
	\newblock Fluctuations of linear eigenvalue statistics of reverse circulant and
	symmetric circulant matrices with independent entries.
	\newblock {\em Journal of Mathematical Physics}, 62(4):043506, 2021.
	
	\bibitem{4493413}
	R.~R. Nadakuditi and A.~Edelman.
	\newblock Sample eigenvalue based detection of high-dimensional signals in
	white noise using relatively few samples.
	\newblock {\em IEEE Transactions on Signal Processing}, 56(7):2625--2638, 2008.
	
	\bibitem{o2016central}
	S.~O’Rourke and D.~Renfrew.
	\newblock Central limit theorem for linear eigenvalue statistics of elliptic
	random matrices.
	\newblock {\em Journal of Theoretical Probability}, 29:1121--1191, 2016.
	
	\bibitem{rider2006gaussian}
	B.~Rider and J.~W. Silverstein.
	\newblock {Gaussian fluctuations for non-Hermitian random matrix ensembles}.
	\newblock {\em The Annals of Probability}, 34(6):2118 -- 2143, 2006.
	
	\bibitem{serfaty2024lecturescoulombrieszgases}
	S.~Serfaty.
	\newblock Lectures on coulomb and riesz gases, 2024.
	
	\bibitem{shcherbina2015fluctuations}
	M.~Shcherbina.
	\newblock On fluctuations of eigenvalues of random band matrices.
	\newblock {\em Journal of Statistical Physics}, 161(1):73--90, 2015.
	
	\bibitem{sinai1998central}
	Y.~Sinai and A.~Soshnikov.
	\newblock Central limit theorem for traces of large random symmetric matrices
	with independent matrix elements.
	\newblock {\em Boletim da Sociedade Brasileira de
		Matem{\'a}tica-Bulletin/Brazilian Mathematical Society}, 29(1):1--24, 1998.
	
	\bibitem{weaver1985centrosymmetric}
	J.~R. Weaver.
	\newblock Centrosymmetric (cross-symmetric) matrices, their basic properties,
	eigenvalues, and eigenvectors.
	\newblock {\em The American Mathematical Monthly}, 92(10):711--717, 1985.
	
\end{thebibliography}

\end{document}